\documentclass{amsart}
\usepackage{graphicx}
\usepackage{calc}

\usepackage[cmtip,arrow]{xy}
\usepackage{pb-diagram,pb-xy}
\usepackage{amsmath,amsthm,amssymb,tabularx}

\allowdisplaybreaks

\newcommand{\abs}[1]{\left\vert#1\right\vert}

\newtheorem{theorem}{Theorem}

\def\supp{\operatorname{\sf supp}}
\def\id{\operatorname{\sf id}}

\begin{document}

\title{Stereotype approximation property for the group algebra ${\mathcal C}^\star(G)$ of measures}

\author{S.~S.~Akbarov}

\address{All-Russian Institute of Scientific and Technical Information \\ (VINITI RAN) \\
 Usievicha 20,\\ Moscow, A-190, 125190, Russia}

\email{sergei.akbarov@gmail.com}

\thanks{Supported by the RFBR grant No. 18-01-00398.}

\maketitle

In \cite{Akbarov-approx,Akbarov} the author described the stereotype approximation property, an analog of the classical approximation property transferred into the category ${\tt Ste}$ of stereotype spaces. It was noticed in \cite{Akbarov} that for a stereotype space $X$ the stereotype approximation is formally a stronger condition than the classical approximation (although up to now it remains unclear whether these conditions are equivalent or not). For this reason the question of which concrete spaces in the standard package used in functional analysis have the stereotype approximation property is quite difficult (the only exception is the case when the space has a topological basis in some reasonable sence). In this paper we prove the following fact concerning the stereotype group algebras $\mathcal C^\star(G)$ (defined in \cite{Akbarov-group-algebras,Akbarov}).

\begin{theorem} For any locally compact group $G$ the stereotype spaces $\mathcal
C(G)$ of continuous functions and $\mathcal C^\star (G)$ of measures with compact support on $G$ have the stereotype approximation property. \end{theorem}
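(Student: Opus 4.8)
The plan is to reduce both assertions to the space $\mathcal C(G)$ and then to exhibit an explicit net of finite-rank operators converging to the identity. Since $\mathcal C^\star(G)=\mathcal C(G)^\star$, and transposition $T\mapsto T^\star$ is an isomorphism of stereotype spaces $\mathcal L\bigl(\mathcal C(G),\mathcal C(G)\bigr)\cong\mathcal L\bigl(\mathcal C^\star(G),\mathcal C^\star(G)\bigr)$ that carries $\id$ to $\id$ and finite-rank operators to finite-rank operators, it follows (cf.\ \cite{Akbarov-approx,Akbarov}) that $\mathcal C(G)$ has the stereotype approximation property if and only if $\mathcal C^\star(G)$ does; alternatively one may simply transpose the operators built below to treat $\mathcal C^\star(G)$ directly. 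Recall that for a stereotype space $X$ the stereotype approximation property means precisely that $\id_X$ lies in the closure of the finite-rank operators $X^\star\otimes X$ in the stereotype operator space $\mathcal L(X,X)$, whose topology is that of uniform convergence on totally bounded subsets of $X$. So the task is, given a totally bounded set $S\subseteq\mathcal C(G)$, a compact set $Q\subseteq G$ and $\varepsilon>0$, to produce a finite-rank operator $P$ on $\mathcal C(G)$ with $\sup_{f\in S}\sup_{x\in Q}\abs{Pf(x)-f(x)}\le\varepsilon$.

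The construction combines the Ascoli description of totally bounded subsets of $\mathcal C(G)$ with a partition of unity. A set $S\subseteq\mathcal C(G)$ is totally bounded if and only if, for every compact $Q$, the restrictions $\{f|_Q:f\in S\}$ form a bounded, equicontinuous subset of $\mathcal C(Q)$; a routine covering argument in the topological group $G$ then yields, for every compact $Q$ and every $\varepsilon>0$, a symmetric open neighbourhood $V$ of the unit $e$ with $\abs{f(x)-f(y)}<\varepsilon$ whenever $x,y\in Q$ and $x^{-1}y\in V$, simultaneously for all $f\in S$. Now fix $S$, $Q$ and $\varepsilon$ as in the first paragraph, take such a $V$, cover $Q$ by finitely many translates $x_1V,\dots,x_nV$ with $x_i\in Q$, and, $G$ being locally compact Hausdorff, choose $g_1,\dots,g_n\in\mathcal C(G)$ with $0\le g_i\le 1$, $\supp g_i\subseteq x_iV$ and $\sum_{i=1}^n g_i\equiv 1$ on $Q$. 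Set
\[
 P=\sum_{i=1}^{n}\delta_{x_i}\otimes g_i,\qquad\text{i.e.}\qquad Pf=\sum_{i=1}^{n} f(x_i)\,g_i,
\]
which is a finite-rank operator on $\mathcal C(G)$ because $\delta_{x_i}\in\mathcal C^\star(G)$ and $g_i\in\mathcal C(G)$. For $x\in Q$ and $f\in S$ we have $Pf(x)-f(x)=\sum_i\bigl(f(x_i)-f(x)\bigr)g_i(x)$, and $g_i(x)\neq 0$ forces $x\in x_iV$, hence $x_i^{-1}x\in V$ and $\abs{f(x_i)-f(x)}<\varepsilon$; therefore $\abs{Pf(x)-f(x)}\le\varepsilon\sum_i g_i(x)=\varepsilon$, uniformly in $f\in S$ and $x\in Q$.

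To obtain a convergent net, index these operators by the triples $\alpha=(S,Q,\varepsilon)$, ordered by $(S,Q,\varepsilon)\preceq(S',Q',\varepsilon')$ iff $S\subseteq S'$, $Q\subseteq Q'$ and $\varepsilon\ge\varepsilon'$, and let $P_\alpha$ be the operator just constructed for $\alpha$. For any fixed $(S_0,Q_0,\varepsilon_0)$, every $\alpha\succeq(S_0,Q_0,\varepsilon_0)$ satisfies $\sup_{f\in S_0}\sup_{x\in Q_0}\abs{P_\alpha f(x)-f(x)}<\varepsilon_0$, so $P_\alpha\to\id$ uniformly on all totally bounded subsets of $\mathcal C(G)$, i.e.\ in $\mathcal L\bigl(\mathcal C(G),\mathcal C(G)\bigr)$. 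Hence $\id_{\mathcal C(G)}$ lies in the closure of $\mathcal C^\star(G)\otimes\mathcal C(G)$, so $\mathcal C(G)$ has the stereotype approximation property, and by the first paragraph so does $\mathcal C^\star(G)$; concretely, the transposed operators $P_\alpha^\star=\sum_i g_i\otimes\delta_{x_i}$ replace a measure $\mu$ by the discrete measure $\sum_i\bigl(\int g_i\,d\mu\bigr)\delta_{x_i}$.

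The step I expect to require the most care is not the algebra of the construction but the verification that it takes place in the correct topology. The stereotype approximation property is a priori stronger than the classical one, and one must check the convergence $P_\alpha\to\id$ in the stereotype topology of $\mathcal L\bigl(\mathcal C(G),\mathcal C(G)\bigr)$, that is, uniformly over \emph{every} totally bounded family of functions simultaneously; this in turn forces one to have the Ascoli criterion and the existence of partitions of unity subordinate to finite covers of compacta available for an arbitrary locally compact group, with no metrizability or $\sigma$-compactness assumption. Matching the mesh of the partition of unity to the equicontinuity modulus $V$ of the totally bounded set $S$ is precisely where these two ingredients are brought together.
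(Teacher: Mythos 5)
There is a genuine gap, and it sits exactly at the point you flag as delicate: the topology in which convergence must be verified. You assert that the topology of $\mathcal L(\mathcal C(G),\mathcal C(G))$ is that of uniform convergence on totally bounded subsets of $\mathcal C(G)$. It is not: in the stereotype framework that topology defines the space denoted $\mathcal C(G):\mathcal C(G)$, whereas $\mathcal L(\mathcal C(G))$ is by definition its \emph{pseudosaturation} $\bigl(\mathcal C(G):\mathcal C(G)\bigr)^\vartriangle$, which carries a finer topology. This distinction is precisely why the stereotype approximation property is formally stronger than the classical one, as the paper's introduction stresses. Your argument correctly shows that $P_\alpha\to\id$ uniformly on totally bounded sets, i.e.\ in $\mathcal C(G):\mathcal C(G)$; that establishes the classical approximation property but not the stereotype one. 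The paper bridges this gap by an extra step: it shows that its approximating net is itself \emph{totally bounded} in $\mathcal C(G):\mathcal C(G)$ (via \cite[Theorem 5.1]{Akbarov} and Ascoli applied to the orbits $\{P^Uf\}$), and a totally bounded net converging in a space also converges in its pseudosaturation. Your proof contains no analogue of this step.

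Worse, the step appears not to be repairable for your particular net without modification. Fix $f$ and a compact $K$. Within any tail $\alpha\succeq\alpha_0$ of your directed set you may keep $\varepsilon_\alpha=\varepsilon_{\alpha_0}$ while enlarging $S_\alpha$, which forces the mesh $V_\alpha$ to become arbitrarily small; on an overlap of two patches one has $P_\alpha f=f(x_1)+\bigl(f(x_2)-f(x_1)\bigr)g_2$, so $P_\alpha f$ can oscillate with amplitude comparable to $\varepsilon_{\alpha_0}$ at arbitrarily fine scales (the partition functions $g_i$ are unconstrained inside their supports). Hence no tail of $\{P_\alpha f\}$ is equicontinuous on $K$, Ascoli fails, and the orbit is not totally bounded in $\mathcal C(G)$ — so the mechanism that upgrades convergence to the pseudosaturated topology is unavailable. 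By contrast, the paper's convolution operators $P^Uf=\eta_U*f$ have orbits that are uniformly bounded and equicontinuous on compacta for \emph{every} $U\subseteq U_0$, which is what makes the upgrade work. Your partition-of-unity construction is more elementary (no Haar measure, no reduction to $\sigma$-compact groups) and does prove the classical approximation property for $\mathcal C(G)$ over an arbitrary locally compact group, but as written it does not prove the theorem. (Your duality reduction for $\mathcal C^\star(G)$ is fine and matches the paper's use of \cite[Theorem 9.5]{Akbarov}.)
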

\begin{proof}
The stereotype approximation is inherited under the passage to the stereotype dual space \cite[Theorem 9.5]{Akbarov}, hence it is sufficient to consider the space $\mathcal C(G)$. Each locally compact group $G$ has a $\sigma$-compact open subgroup $H$, and for such a subgroup the space $\mathcal C(G)$ is a direct sum of some family of copies of the space $\mathcal C(H)$:
$$
\mathcal C(G)\cong \prod_{g\cdot H\in G/H} \mathcal C(g\cdot H)
$$
This implies that we can consider only the case when the group $G$ is $\sigma$-compact. Let us fix a compact neighbourhood of unity $U_0$ in $G$. For any compact neighbourhood of unity $U\subseteq U_0$ let us choose a function $\eta_U \in \mathcal C(G)$ with the following properties (here $\mu$ is a given left-invariant Haar measure):
$$
\supp \eta_U\subseteq U, \quad \eta_U \ge 0, \quad \int_G \eta_U(t)\cdot
\mu(dt)=1.
$$
Then $\eta_U$ is a neighbourhood tending to the $\delta$-function:
\begin{multline*}
\eta_U * f (t)= \int_G \eta_U (s)\cdot f(s^{-1}\cdot t) \cdot \mu(ds)=\\=
\int_G \Delta\big(s^{-1}\big)\cdot \eta_U (t\cdot s^{-1})\cdot f(s) \cdot \mu(ds) \quad
\underset{U\to 1}{\longrightarrow} \quad f(t)
\end{multline*}
($\Delta$ is the modular function on $G$, cf. \cite{Hewitt-Ross}).

Let us show that the net of operators (not of finite rank so far)
$$
P^U : \mathcal C(G) \to \mathcal C(G) \quad | \quad P^U f=\eta_U * f
$$
tends to the identity in $\mathcal L(\mathcal C(G))$:
\begin{equation}\label{P^U->1}
P^U \, \underset{U\to 1}{\overset{\mathcal L(\mathcal C(G))}{\longrightarrow}}
\, \id_{\mathcal C(G)}
\end{equation}

1. By the Ascoli theorem \cite[8.2.10]{Engelking} each compact set $\Phi \subseteq \mathcal C(G)$ is a set of functions equicontinuous on each compact set $K\subseteq G$. Hence for given $K\subseteq G$ and $\varepsilon>0$ there is a neighbourhood of unity $V$ in $G$ such that
$$
\sup_{f\in\Phi,\ t\in K,\ s\in V}\abs{f(s^{-1}\cdot t)-f(t)}<\varepsilon.
$$
For any neighbourhood $U\subseteq V$, for any function $f\in\Phi$ and for any point $t\in K$ we have
\begin{multline*}
\abs{(P^Uf-f)(t)}=\abs{(\eta_U*f-f)(t)}=\abs{\int_G \eta_U (s)\cdot f(s^{-1}\cdot t) \cdot \mu(ds)-f(t)}=\\=
\abs{\int_G \eta_U (s)\cdot f(s^{-1}\cdot t) \cdot \mu(ds)-f(t)\cdot \int_G \eta_U (s)\ \mu(d s)}=\\=
\abs{\int_G \eta_U (s)\cdot \Big( f(s^{-1}\cdot t) -f(t)\Big)\ \mu(d s)}\le\\ \le
\underbrace{\max_{t\in K, \ s\in U}\abs{f(s^{-1}\cdot t) -f(t)}}_{\scriptsize\begin{matrix}\text{\rotatebox{90}{$>$}}\\ \varepsilon\end{matrix}}\cdot \underbrace{\int_G \eta_U (s)\ \mu(d s)}_{\scriptsize\begin{matrix}\|\\ 1\end{matrix}}<\varepsilon.
\end{multline*}
We can conclude that the operators $P^U$ tend to $\id_{\mathcal C(G)}$ uniformly on compact sets $\Phi \subseteq \mathcal C(G)$:
$$
P^Uf=\eta_U * f \, \underset{U\to 1, f\in \Phi}{\overset{\mathcal
C(G)}{\rightrightarrows}} \, f,
$$
In other words, the net $\{ P^U ; U\subseteq U_0\}$ tends to the identity operator in the space ${\mathcal C}(G):{\mathcal C}(G)$ (which was defined in \cite{Akbarov} as the space of operators $\varphi:{\mathcal C}(G)\to {\mathcal C}(G)$ with the topology of uniform convergence on compact sets in ${\mathcal C}(G)$):
$$
P^U \, \underset{U\to 1}{\overset{\mathcal C(G) : \mathcal
C(G)}{\longrightarrow}} \, \id_{\mathcal C(G)}
$$

2. Let us show that the net $\{ P^U ; U\subseteq U_0\}$ is totally bounded in ${\mathcal C}(G):{\mathcal C}(G)$. Take a function $f\in \mathcal C(G)$ and note that the net of functions $\{ P^Uf; U\subseteq U_0\}$ is uniformly bounded on each compact set $K\subseteq G$:
\begin{multline*}
\sup_{U\subseteq U_0}\abs{P^Uf(t)}=\sup_{U\subseteq U_0}\abs{(\eta_U*f)(t)}=
\sup_{U\subseteq U_0}\abs{\int_G \eta_U (s)\cdot f(s^{-1}\cdot t) \cdot \mu(ds)}\le\\ \le
\max_{t\in K, \ s\in U_0}\abs{f(s^{-1}\cdot t) -f(t)}\cdot \sup_{U\subseteq U_0}\underbrace{\int_G \eta_U (s)\ \mu(d s)}_{\scriptsize\begin{matrix}\|\\ 1\end{matrix}}=\\=\max_{t\in K, \ s\in U_0}\abs{f(s^{-1}\cdot t) -f(t)}<\infty.
\end{multline*}
On the other hand, the net $\{ P^Uf; U\subseteq U_0\}$ is equicontinuous on each compact set $K\subseteq G$, since for any $\varepsilon>0$ we can find a neighbourhood of unity $V$ such that
$$
\Big(t_1,t_2\in U_0^{-1}\cdot K\ \&\ t_1^{-1}\cdot t_2\in V\Big)\quad\Rightarrow\quad \abs{f(t_1)-f(t_2)}<\varepsilon.
$$
and then for $t_1,t_2\in K$ with the condition $t_1^{-1}\cdot t_2\in V$ and for $s\in U\subseteq U_0$ we have
$$
(\underbrace{s^{-1}\cdot t_1}_{\scriptsize\begin{matrix}\text{\rotatebox{90}{$\owns$}}\\ U_0^{-1}\cdot K\end{matrix}})^{-1}\cdot (\underbrace{s^{-1}\cdot t_2}_{\scriptsize\begin{matrix}\text{\rotatebox{90}{$\owns$}}\\ U_0^{-1}\cdot K\end{matrix}})=t_1^{-1}\cdot s\cdot s^{-1}\cdot t_2=t_1^{-1}\cdot t_2\in V
\quad\Rightarrow\quad \abs{f(s^{-1}\cdot t_1)-f(s^{-1}\cdot t_2)}<\varepsilon,
$$
hence
\begin{multline*}
\abs{P^Uf(t_1)-P^Uf(t_2)}=\abs{(\eta_U*f)(t_1)-(\eta_U*f)(t_2)}=\\
=\abs{\int_G \eta_U (s)\cdot\Big( f(s^{-1}\cdot t_1)-f(s^{-1}\cdot t_2)\Big) \cdot \mu(ds)}\le\\ \le
\underbrace{\max_{s\in U_0}\abs{f(s^{-1}\cdot t_1)-f(s^{-1}\cdot t_2)}}_{\scriptsize\begin{matrix}\text{\rotatebox{90}{$>$}}\\ \varepsilon\end{matrix}}\cdot \underbrace{\int_G \eta_U (s)\ \mu(d s)}_{\scriptsize\begin{matrix}\|\\ 1\end{matrix}}<\varepsilon
\end{multline*}
Thus, the net $\{ P^Uf; U\subseteq U_0\}$ is uniformly bounded and equicontinuous on each compact set $K\subseteq G$. By the Ascoli theorem \cite[8.2.10]{Engelking}, $\{ P^Uf; U\subseteq U_0\}$ is totally bounded in the space $\mathcal C(G)$. This is true for any function $f\in \mathcal C(G)$, therefore by \cite[Theorem 5.1]{Akbarov}, the net $\{ P^U ; U\subseteq U_0\}$ is totally bounded in the space ${\mathcal C}(G):{\mathcal C}(G)$.

Let us summarize what we told in 1 and 2: the net $\{ P^U ; U\subseteq U_0\}$ is totally bounded and tends to the identity operator in the space ${\mathcal C}(G):{\mathcal C}(G)$. This means that it tends to the identity operator in the pseudosaturation $\mathcal L(\mathcal C(G))=\Big({\mathcal C}(G):{\mathcal C}(G)\Big)^\vartriangle$ of the space ${\mathcal C}(G):{\mathcal C}(G)$. I.e. \eqref{P^U->1} holds.

Now it is sufficient to show that each operator $P^U$ can be approximated in
$\mathcal L(\mathcal C(G))$ by some operators of finite rank. Indeed, using the  $\sigma$-compactness of $G$ we can choose a {\it sequence} of functions $F_n$ in $\mathcal C(G\times G)$ such that
\begin{itemize}
 \item[(i)] $F_n (t,s) \, \underset{n\to
\infty}{\overset{\mathcal C(G\times G)}{\longrightarrow}} \, \Delta(s^{-1})\cdot\eta_U(t\cdot
s^{-1})$
 \item[(ii)] $F_n (t,s)=\sum_{k=1}^N u_k (t)\cdot
v_k(s)$, where $N\in{\mathbb N}$ depends on $n$, and $u_k,v_k\in \mathcal C(G)$ are functions with compact support on $G$.
\end{itemize}
Then the operators of finite rank
$$
P^U_n f(t)=\int_G F_n(t,s)\cdot f(s)\  \mu(d s)=\sum_{k=1}^N u_k(t) \cdot \int_G v_k(s) \cdot f(s) \cdot \mu (ds)
$$
tend to $P^U$ in $\mathcal C(G) : \mathcal C(G)$. Since $P^U_n$ is a converging {\it sequence}, it is totally bounded in ${\mathcal C}(G):{\mathcal C}(G)$. Therefore $P^U_n$ tends to $P^U$ in $\Big({\mathcal C}(G):{\mathcal C}(G)\Big)^\vartriangle=\mathcal L(\mathcal C(G))$.
\end{proof}


\begin{thebibliography}{99}


\bibitem{Akbarov-approx} S.~S.~Akbarov. Stereotype approximation property and the uniqueness problem for the trace, Funkts. Anal. Prilozh., 33(2): 137-140, 1999.

\bibitem{Akbarov-group-algebras} S.~S.~Akbarov. Stereotype group algebras, Mathematical Notes, 66(6): 777-780, 1999.

\bibitem{Akbarov} S.S.Akbarov, Pontryagin duality in the theory of topological
vector spaces and in topological algebra. Journal of Mathematical Sciences.
113(2):179-349 (2003).

\bibitem{Hewitt-Ross} E.~Hewitt, K.~A.~Ross, Abstract Harmonic Analysis, Volume I, Springer, 1994.

\bibitem{Engelking} R.~Engelking, General Topology, Warszawa, 1977.

\end{thebibliography}
\end{document}